\newtheorem{theorem}{Theorem}[section]
\newtheorem{lemma}[theorem]{Lemma}
\theoremstyle{definition}
\newtheorem{definition}[theorem]{Definition}
\theoremstyle{remark}
\newtheorem{remark}[theorem]{Remark}
\numberwithin{equation}{section}
\DeclareMathOperator{\vol}{vol}
\DeclareMathOperator{\area}{area}
\DeclareMathOperator{\aff}{aff}
\DeclareMathOperator{\conv}{conv}
\newcommand{\const}{{\rm const}}
\renewcommand{\epsilon}{\varepsilon}
\renewcommand{\phi}{\varphi}
\renewcommand{\kappa}{\varkappa}
\begin{document}

\title{Rogers's proof of Vaaler's theorem}

\author{Roman Karasev}


\thanks{The research was carried out within the state assignment 1.1.1-0029/25 of Ministry of Science and Higher Education of the Russian Federation for IITP RAS} 



\address{Roman Karasev, Institute for Information Transmission Problems RAS, Bolshoy Karetny per. 19, Moscow, Russia 127994 and Moscow Institute of Physics and Technology, Institutskiy per. 9, Dolgoprudny, Russia 141700}
\email{r\_n\_karasev@mail.ru}
\urladdr{http://www.rkarasev.ru/en/}


\subjclass[2010]{52A38, 52B10, 52B11}
\keywords{sections of the cube, volume of polytopes}

\begin{abstract}
We note that an argument by Rogers (1958) gives a proof of Vaaler's theorem (1979) about sections of the cube and allows certain generalizations of the theorem.
\end{abstract}

\maketitle

\section{Introduction}

Vaaler's theorem~\cite{vaaler1979} asserts that \emph{any section of the cube $[-1,1]^N$ by a linear $n$-dimensional subspace has $n$-dimensional volume at least $2^n$.} Different proofs of this theorem are known. The original approach was by defining the ``more peaked'' partial order on densities and developing its properties, but other approaches are also able to prove it, see~\cite[Section~2.2]{ahk2016} for example. 

Here we show that a relatively straightforward approach to Vaaler's theorem was found by Rogers in~\cite{rogers1958} more than twenty years before the publication of Vaaler's proof. In fact, Rogers's approach allows to prove certain generalizations that we state below.

\begin{theorem}
\label{theorem:rogers}
Let $P\subset\mathbb R^n$ be a polyhedron containing the origin in its interior and having the property that for any face $F\subset P$ of codimension $k$ the distance from $0$ to the affine span of $F$ is at least $\sqrt k$. Then the volume of $P$ is at least $2^n$.
\end{theorem}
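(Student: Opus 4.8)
\medskip
\noindent\emph{Proof plan.}
We may assume $P$ is bounded, the unbounded case being trivial. The plan, following Rogers~\cite{rogers1958}, is to cut $P$ into right-angled (``path'') simplices built from the face structure and from orthogonal projections of the origin, and then to bound $\vol(P)$ from below by induction on the dimension. \emph{Step 1 (Rogers's decomposition).} To each complete flag of faces $P\supsetneq F_1\supsetneq\dots\supsetneq F_n$, with $F_k$ of codimension $k$, assign the simplex $\Delta=\conv\{q_0,q_1,\dots,q_n\}$, where $q_0=0$ and $q_k$ is the point of $\aff(F_k)$ nearest to $0$. When every $q_k$ happens to lie in $F_k$ these simplices tile $P$ (pairwise disjoint interiors, union $P$); in general $q_k$ can fall outside $F_k$, and one then cones instead from the point of $F_k$ nearest to the preceding apex, which always lies in $F_k$ and again gives an honest tiling, at the price that a few pieces are no longer exact orthoschemes --- a nuisance one removes by a perturbation argument or, as Rogers does, by a signed count.

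\emph{Step 2 (shape of the pieces).} Because $\aff(F_k)\subset\aff(F_{k-1})$, a short computation shows the increments $q_k-q_{k-1}$ are mutually orthogonal; hence $\Delta$ is an orthoscheme, $\vol(\Delta)=\tfrac1{n!}\,d_1d_2\cdots d_n$ with $d_k=|q_k-q_{k-1}|$, and
\[
 d_1^2+\dots+d_k^2 \;=\;|q_k|^2\;=\;\dist(0,\aff(F_k))^2\;\ge\;k ,\qquad 1\le k\le n,
\]
which is exactly the hypothesis of the theorem. Summing over flags gives $\vol(P)=\tfrac1{n!}\sum_{\mathrm{flags}}d_1\cdots d_n$, so the theorem reduces to
\[
 \sum_{\mathrm{flags}} d_1 d_2\cdots d_n \;\ge\; 2^n\,n! .
\]

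\emph{Step 3 (the bound).} Group the flags by their first face $F_1$ and peel it off: with $\delta:=d_1=\dist(0,\aff(F_1))$ and new apex $q_1$, the flags through $F_1$ are precisely the flags of the facet $F_1$, and their remaining edge lengths satisfy $d_2^2+\dots+d_k^2=|q_k|^2-\delta^2\ge k-\delta^2$. So $F_1$, regarded as an $(n-1)$-dimensional polytope centred at $q_1$, again satisfies a hypothesis of the same type, but with a ``deficit'' $\delta^2-1\ge0$. This suggests an induction on the dimension whose statement records the whole profile $k\mapsto\dist(0,\aff(F_k))^2$, not merely the lower bound $k$; the inductive step couples the bound for each facet with a propagation inequality, schematically $\sum_{F_1}\tfrac{\delta}{n}\,\Phi(\text{profile of }F_1)\ge\Phi(\text{profile of }P)$, the functional $\Phi$ being normalized so that the profile $k\mapsto k$ returns $2^n$. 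For the cube this is an equality; for the regular simplex (which also satisfies the hypothesis) it holds with room to spare.

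\emph{Main obstacle.} The crux is to pin down the functional $\Phi$ --- equivalently, the correct inductive quantity --- for which the propagation inequality holds in full generality. A naive single-parameter bookkeeping, tracking only the worst deficit $k-\dist(0,\aff(F_k))^2$, breaks down: a facet $F_1$ far from the origin inherits a large deficit and then contributes nothing to the induction, even though the tall, slender cone $\conv(0,F_1)$ must still be charged to $\vol(P)$. The profile has to remember enough to record that the height of such a cone offsets the smallness of the distant facet. Making this quantitative --- in the spirit of the local-density estimate that drives Rogers's sphere-packing bound in~\cite{rogers1958} --- is the heart of the argument.
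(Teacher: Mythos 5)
Your Step 1 and Step 2 set up essentially the same decomposition as the paper (Rogers's flag subdivision into orthoschemes with $|q_k|=\dist(0,\aff F_k)\ge\sqrt k$), but the proof as proposed has a genuine gap, and you have correctly located it yourself: Step 3 is not an argument but a request for one. The reduction you arrive at, $\sum_{\mathrm{flags}}d_1\cdots d_n\ge 2^n\,n!$, cannot be attacked flag by flag (a single simplex of the subdivision can have arbitrarily small volume, even zero when it degenerates), and the dimension-induction with an unspecified profile functional $\Phi$ is exactly the part you leave open. The paper needs no such functional. The mechanism --- which is the same ``local density'' trick you allude to from Rogers's packing bound, but which you do not actually deploy --- is to compare each simplex not to a fixed volume but to the piece of the unit ball it contains. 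Since the hypothesis gives $B_0(1)\subset P$, it suffices to prove, for every simplex $A$ of the subdivision, the density bound
\[
\frac{\vol\bigl(B_0(1)\cap A\bigr)}{\vol A}\;\le\;\frac{\vol B_0(1)}{2^n},
\]
and then sum over the tiling: $\vol B_0(1)=\sum_A\vol(B_0(1)\cap A)\le\frac{\vol B_0(1)}{2^n}\vol P$. The density bound for a single $A$ is obtained by building an affine map $f:A\to C$ onto the standard orthoscheme $C$ of the cube (with $|c_k|=\sqrt k$) that does not increase distance to the origin, so that $f(B_0(1)\cap A)\subseteq B_0(1)\cap C$, while $\vol(B_0(1)\cap C)/\vol C=\vol B_0(1)/2^n$ exactly because $2^n n!$ copies of $C$ tile $[-1,1]^n$. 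This map is constructed in two stages $A\to B\to C$: first moving each vertex $a_k$ (the nearest point of the face $F_k$, which honestly lies in $F_k$ and gives the tiling) to $b_k$ (the nearest point of $\aff F_k$), one vertex at a time, checking via a supporting-hyperplane argument that each step is distance-non-increasing; then a diagonal rescaling between orthoschemes, which is distance-non-increasing precisely because $|b_k|\ge|c_k|=\sqrt k$. Note also that your identity $\vol P=\frac1{n!}\sum_{\mathrm{flags}}d_1\cdots d_n$ conflates the tiling simplices (vertices in the faces) with the orthoschemes (vertices in the affine spans); these differ whenever some $q_k\notin F_k$, and the paper connects them only through the distance-non-increasing map, not through an equality of volumes.
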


The equality in the above theorem is attained when $P$ is a unit cube itself. The next theorem is a modification of Vaaler's theorem that was conjectured (for sections of the cube) by Grigory~M.~Ivanov in private communication. A version of this question for $n=2$ was discussed on \href{https://mathoverflow.net/questions/264805/shortest-path-connecting-two-opposite-points-on-a-cube}{mathoverflow.net} and an upper bound in codimension $1$ was established in~\cite{konig-koldobsky2019}. We only establish the following result in dimensions $n\le 3$.

\begin{theorem}
\label{theorem:rogers-surface}
Let $P\subset\mathbb R^n$ for $n=2,3$ be a polyhedron containing the origin in its interior and having the property that for any face $F\subset P$ of codimension $k$ the distance from $0$ to the affine span of $F$ is at least $\sqrt k$. Then the $(n-1)$-dimensional volume of $\partial P$ $($the surface area$)$ is at least $n 2^n$.
\end{theorem}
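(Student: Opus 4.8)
The plan is to express $\vol_{n-1}(\partial P)$ as a sum over the facets of $P$, and then as a sum over lower faces obtained by coning each facet from the foot of the perpendicular dropped to its affine hull; the distance hypotheses translate into exact lower bounds for the pieces, which one then sums against an "angular'' budget. Throughout we treat $P$ as a convex polytope.

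Consider first $n=2$, so $P$ is a convex polygon with $0$ in its interior. Writing $\partial P$ in polar coordinates centred at $0$, over the arc subtended by an edge (facet) $F$ with $h_F:=\dist(0,\aff F)\ge1$ the radial function is $h_F\sec(\theta-\theta_F)$, $\theta_F$ being the direction of the foot of the perpendicular from $0$ to $\aff F$, and the arclength element is $h_F\sec^2(\theta-\theta_F)\,d\theta$; integrating gives the exact identity $\vol_1(F)=h_F(\tan\sigma_F-\tan\tau_F)$, where $\tau_F<\sigma_F$ are the signed angular positions of the two endpoints of $F$ measured from $\theta_F$. Each endpoint is a vertex of $P$, i.e.\ a face of codimension $2$, hence lies at distance $\ge\sqrt2$ from $0$; since that distance equals $h_F/\cos\sigma_F$ (resp.\ $h_F/\cos\tau_F$), we get $h_F\ge\max\!\bigl(1,\sqrt2\cos\sigma_F,\sqrt2\cos\tau_F\bigr)$.

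The heart of the planar case is then the one–variable inequality
\[
\max\!\bigl(1,\sqrt2\cos\sigma,\sqrt2\cos\tau\bigr)\,(\tan\sigma-\tan\tau)\ \ge\ \tfrac4\pi\,(\sigma-\tau),
\qquad -\tfrac\pi2<\tau<\sigma<\tfrac\pi2,
\]
with equality exactly at $\sigma=-\tau=\pi/4$. I would prove it by cases according to which term realises the maximum, using the Jordan-type bounds $\sin x/x\ge 2\sqrt2/\pi$ on $(0,\pi/4]$ and $\tan x/x\ge4/\pi$ on $[\pi/4,\pi/2)$; the least obvious configuration, $\tau\le-\pi/4\le\pi/4\le\sigma$, follows from the convexity of $\tan$ on each half-interval and the fact that then $\sigma-\tau\ge\pi/2$. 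Granting this, $\vol_1(F)\ge\frac4\pi(\sigma_F-\tau_F)$, and since the angular ranges of the edges tile the circle, $\sum_F(\sigma_F-\tau_F)=2\pi$, summation yields $\vol_1(\partial P)\ge8$, with $[-1,1]^2$ the unique equality case. (This also re-proves, and sharpens, the planar instance of Theorem~\ref{theorem:rogers}.)

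For $n=3$ the same idea is iterated. Triangulate each facet $F$ from the foot $p_F$ of the perpendicular from $0$ to $\aff F$, and split each triangle $\conv(p_F,G)$ at the foot $q_G$ of the perpendicular from $p_F$ to $\aff G$ (for an edge $G\subset F$); the key observation is that $q_G$ is also the foot of the perpendicular from $0$ to $\aff G$, hence is independent of which facet through $G$ we use. This decomposes $\partial P$ into right triangles indexed by the flags facet $\supset$ edge $\supset$ vertex, the triangle of $(F,G,v)$ having legs $\sqrt{D_G^2-h_F^2}$ and $\sqrt{|v|^2-D_G^2}$ where $D_G:=\dist(0,\aff G)\ge\sqrt2$ and $|v|\ge\sqrt3$. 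Grouping the four flags through an edge $G$ (two facets $F_1,F_2$, two endpoints $v_1,v_2$) gives the clean identity
\[
\vol_2(\partial P)=\tfrac12\sum_G \vol_1(G)\left(\sqrt{D_G^2-h_{F_1}^2}+\sqrt{D_G^2-h_{F_2}^2}\right),
\]
the sum over edges of $P$; moreover the two radicals are $D_G\cos\omega_1$, $D_G\cos\omega_2$ with $\omega_1+\omega_2=\theta_G$, the interior dihedral angle along $G$, so that in particular $D_G\sin(\theta_G/2)\ge1$ (using $h_{F_i}\ge1$). For the cube each of the twelve terms equals $2(1+1)=4$, giving $24$.

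The main obstacle is that, unlike for $n=2$, no single facet — nor single edge — need contribute at least its cube value: a facet with $h_F$ close to $D_G$ for its edges can be arbitrarily thin while still obeying all the distance hypotheses, so one cannot argue facet by facet and must use the identity globally. My plan is to combine three ingredients: within each facet the flag-angles at $p_F$ still sum to $2\pi$, so $F$ satisfies a planar estimate of the $n=2$ type but with $1$ replaced by $\sqrt{2-h_F^2}$ and $\sqrt2$ by $\sqrt{3-h_F^2}$; the dihedral relation $\omega_1+\omega_2=\theta_G$ and its consequence $D_G\sin(\theta_G/2)\ge1$; and the fact that only boundedly many facets can have $h_F$ near $\sqrt2$ since they must surround the origin — which reduces the claim for $n=3$ to a finite verification, the tightest sub-case being a simplex, where the hypotheses are very rigid. (This is precisely where the restriction to low dimensions enters: the flag identity persists in all dimensions but the global bookkeeping no longer closes.) Finally, one must dispose of degenerate configurations, where some $p_F$ or $q_G$ falls outside the corresponding face: the triangulations then acquire signs, but the signed areas still sum to $\vol_2(\partial P)$ and the one-dimensional estimates carry over verbatim with signed angles.
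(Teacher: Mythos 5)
Your $n=2$ argument is a genuine elementary alternative to the paper's route and looks correct: the inequality $\max(1,\sqrt2\cos\sigma,\sqrt2\cos\tau)\,(\tan\sigma-\tan\tau)\ge\frac4\pi(\sigma-\tau)$ does hold (it amounts to the paper's per-flag estimate applied to the two half-edges on either side of the foot of the perpendicular, with the ``signed'' configurations controlled by monotonicity of $h\tan x-\frac4\pi x$ when $h\ge\sqrt2\cos\tau$), and summing against $\sum_F(\sigma_F-\tau_F)=2\pi$ gives $8$. The problem is $n=3$, where you do not have a proof. After setting up the flag decomposition and the identity for $\vol_2(\partial P)$ --- both of which are fine --- you correctly observe that no facet and no edge admits an absolute lower bound, and what you offer in its place (``only boundedly many facets can have $h_F$ near $\sqrt2$\dots which reduces the claim to a finite verification, the tightest sub-case being a simplex'') is an unexecuted plan: no actual reduction to finitely many configurations is carried out, no compactness or rigidity statement is proved, and it is not even specified what the finite list of cases would be. As written, the three-dimensional half of the theorem is simply asserted.

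The idea you are missing is the normalization that makes a purely local, per-flag argument close in dimension $3$. The paper never seeks absolute lower bounds for the contribution of a facet or an edge; instead each flag simplex $A=\conv\{a_0,\ldots,a_n\}$ is charged the ratio $\vol_{n-1}\conv\{a_1,\ldots,a_n\}/\vol\bigl(B_0(1)\cap A\bigr)$, i.e.\ boundary area divided by the solid angle at the origin. The denominators sum to $\vol B_0(1)$ because the flag simplices tile $P\supseteq B_0(1)$ and the numerators tile $\partial P$, so a uniform per-flag bound by the cube's value suffices. That per-flag bound is obtained by passing to the orthoscheme $B$ (which preserves the area-to-volume ratio of the outer facet), then sliding $b_1$ along the circle with diameter $b_0b_2$ down to unit length --- this is exactly where Lemma~\ref{lemma:spherical-triangle} (monotonicity of $\area T(t)/\sin t$ for right spherical triangles) is needed and where the restriction $n\le 3$ enters --- and finally applying Lemma~\ref{lemma:orthoscheme-volume}. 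Your angular budget of $2\pi$ inside each facet cannot substitute for this, because it does not couple the facets across the dihedral angles; to complete your version you would either need such a solid-angle normalization or to actually perform the finite verification you allude to.
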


Again, the equality in this theorem is attained when $P$ is a unit cube. See the classical book~\cite{zong2009} for discussions of other results about the sections of the cube, and the paper~\cite{ivanov2020} for another elementary approach to extremal sections and projections, as well as many open questions in the area.

\subsection*{Acknowledgments} The author thanks Grigory M. Ivanov for the discussions and suggestions, and the anonymous referees for carefully reading the text and suggesting several corrections.

\section{Proof of the volume estimate}

We start with a quick explanation of the relation of the above statements to Vaaler's theorem.

\begin{proof}[Deduction of Vaaler's theorem from Theorem~\ref{theorem:rogers}]
The cube $[-1,1]^N$ evidently has the property that for any face $F\subset[-1,1]^N$ of codimension $k$ the distance from $0$ to the affine span of $F$ is at least $\sqrt k$. It remains to note that this property preserves on passing to the section $P=[-1,1]^N\cap L$ since any codimension $k$ face of $P$ is contained in a codimension $k$ face of $[-1,1]^N$.
\end{proof}

\begin{proof}[Proof of Theorem~\ref{theorem:rogers}, following the proof in~\cite{rogers1958}]
For any face $F\subseteq P$ take the point $a_F\in F$ that is closest to the origin. For any chain of faces
\[
P = F_0 \supset F_1 \supset \dots \supset F_n
\]
(the index denotes the codimension) consider the simplex $a_0a_1\dots a_n$, setting $a_k = a_{F_k}$ for brevity and observe (by induction on the skeleta of $F$) that these simplices do not overlap and cover $P$. The situation is similar to a barycentric subdivision, but here a point $a_F$ need not lie in the relative interior of $F$, hence some simplices of the subdivision may be degenerate.

Now we consider one such non-degenerate simplex $A=\conv\{a_0,a_1,\ldots, a_n\}$ and also consider another simplex $B=\conv\{b_0,b_1,\ldots, b_n\}$, where $b_k$ is the point of the affine span $\aff F_k$ closest to the origin. By the assumption of the theorem $|b_k|\ge \sqrt{k}$ for any $k$. The simplex $B$ has the property that its edges $b_kb_{k+1}$ are pairwise orthogonal, which is easy to see in the orthogonal coordinate system in which the flag of affine subspaces $\aff F_k$ is the flag of coordinate subspaces.

Let us show that the affine transformation of $A$ to $B$ taking every $a_k$ to $b_k$ does not increase $|x|$ of any point $x\in A$. This affine transformation can be made in $n$ steps, where on step $k$ one transforms the simplex $\conv\{b_0,\ldots,b_{k-1}, a_k, \ldots, a_n\}$ to $\conv\{b_0,\ldots,b_{k-1}, b_k, a_{k+1}\ldots, a_n\}$. On this transformation step the facet $\conv\{b_0,\ldots,b_{k-1}, a_{k+1}, \ldots, a_n\}$ is fixed, and the vertex $a_k$ moves to $b_k$. We assume that $a_k\neq b_k$, otherwise nothing happens on this transformation step.

Note that $a_k$ is the point of $F_k$ closest to the origin, and also closest to $b_k$ from the orthogonality property of the edges of $B$ mentioned above. The hyperplane $H$ through $b_k$ orthogonal to $a_kb_k$ has $F_k$ on one of its sides. From the orthogonality property of $B$ the affine span of $b_0,\ldots, b_k$ is orthogonal to the affine span of $F_k$ and is therefore contained in $H$. Since $a_m\in F_k$ for any $m\ge k$, $H$ has $\conv\{b_0,\ldots,b_{k-1}, a_k, \ldots, a_n\}$ and $\conv\{b_0,\ldots,b_{k-1}, b_k, a_{k+1}\ldots, a_n\}$ on one of its sides. Since under the affine transformation step that we consider every point $x$ of the simplex  $\conv\{b_0,\ldots,b_{k-1}, a_k, \ldots, a_n\}$ moves in the direction parallel to $b_k-a_k$ and orthogonal to $H$, the distance from this point to $0\in H$ cannot increase.

Let $C$ be the simplex with vertices $c_i$ defined by
\[
c_0 = 0,\quad c_k=(\underbrace{1,\ldots,1}_k , 0, \ldots, 0).
\]
Another affine transformation $B\to C$ taking every $b_k$ to its respective $c_k$ does not increase the distance to the origin by Lemma~\ref{lemma:orthoscheme-volume} below. The resulting affine transformation $f : A\to C$ also does not increase the distance to the origin and therefore has the property for any $r>0$ for intersection with the ball $B_0(r)$ we have
\[
f(B_0(r)\cap A)\subseteq B_0(r)\cap C,
\]
which allows to conclude
\[
\frac{\vol B_0(r)\cap A}{\vol A} = \frac{\vol f(B_0(r)\cap A)}{\vol C} \le \frac{\vol B_0(r)\cap C}{\vol C}.
\]
The simplex $C$ is in fact congruent to one of the $2^n n!$ simplices of the barycentric subdivision of $[-1,1]^n$, hence for $r=1$ we have
\[
\frac{\vol B_0(1)\cap C}{\vol C} = \frac{\vol B_0(1)}{\vol [-1,1]^n} = \frac{\vol B_0(1)}{2^n}.
\]
Summing the thus obtained inequalities
\[
\vol A \ge \frac{2^n}{\vol B_0(1)} \vol B_0(1)\cap A
\]
over all $A$ of the subdivision of $P$ in view of $B_0(1)\subset P$ (that follows from the assumption of the theorem) we obtain
\[
\vol P \ge \frac{2^n}{\vol B_0(1)} \vol B_0(1)\cap P = \frac{2^n}{\vol B_0(1)} \vol B_0(1) = 2^n.
\]
\end{proof}

Let us use the standard terminology for simplices like $B$ in the above proof.

\begin{definition}
A simplex $\conv\{p_0,\ldots,p_n\}\subseteq \mathbb R^n$ is an \emph{orthoscheme} if $\conv\{p_0,\ldots, p_k\}$ is orthogonal to $\conv\{p_k,\ldots, p_n\}$ for every $k=1,\ldots, n-1$.
\end{definition}

\begin{lemma}
\label{lemma:orthoscheme-volume} Let $B=\conv\{b_0,\ldots, b_n\}$ and $C=\conv\{c_0,\ldots, c_n\}$ be two orthoschemes such that $b_0=c_0=0$ and $|b_k|\ge |c_k|$ for all $k$. Then the affine transformation $f : B\to C$ defined by interpolating $f(b_k)=c_k$ has the property that, for any $x\in B$, $|f(x)|\le |x|$. In particular, for any $r>0$ 
\[
\frac{\vol B}{\vol B_0(r)\cap B} \ge \frac{\vol C}{\vol B_0(r)\cap C}.
\]
\end{lemma}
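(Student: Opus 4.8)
The plan is to reduce the statement to a one-variable rearrangement inequality by choosing coordinates adapted to each orthoscheme. First I would use the orthoscheme condition on $B$ to fix an orthonormal basis $e_1,\dots,e_n$ of $\mathbb R^n$ in which $b_k=(t_1,\dots,t_k,0,\dots,0)$ for suitable scalars $t_1,\dots,t_n$: since $b_0=0$, the condition that $\conv\{b_0,\dots,b_k\}$ be orthogonal to $\conv\{b_k,\dots,b_n\}$ means $b_{k+1}-b_k,\dots,b_n-b_k$ are all orthogonal to $\operatorname{span}(b_1,\dots,b_k)$, so inductively each edge $b_k-b_{k-1}$ is a scalar multiple $t_ke_k$ of a new basis vector $e_k$, and the $e_k$ may be chosen orthonormal (and, flipping signs, $t_i\ge0$). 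Consequently $b_k=\sum_{i=1}^k t_ie_i$ and $|b_k|^2=\sum_{i=1}^k t_i^2$. Applying the same reduction to $C$ in its own orthonormal frame $e_1',\dots,e_n'$ gives $c_k=\sum_{i=1}^k s_ie_i'$ with $s_i\ge0$ and $|c_k|^2=\sum_{i=1}^k s_i^2$, so the hypothesis $|b_k|\ge|c_k|$ becomes exactly
\[
\sum_{i=1}^k t_i^2\ \ge\ \sum_{i=1}^k s_i^2\qquad(k=1,\dots,n).
\]

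Next I would write $f$ out explicitly. Since $f(b_0)=c_0=0$ and $f$ is affine, $f$ is linear, and from $b_k-b_{k-1}=t_ke_k\mapsto c_k-c_{k-1}=s_ke_k'$ it acts by $e_k\mapsto(s_k/t_k)e_k'$ (here $t_k>0$ as $B$ is a non-degenerate simplex). For $x\in B$, write $x=\sum_{k=0}^n\lambda_kb_k$ in barycentric coordinates ($\lambda_k\ge0$, $\sum_k\lambda_k=1$); then the $i$-th coordinate of $x$ equals $t_i\mu_i$, where $\mu_i:=\lambda_i+\lambda_{i+1}+\dots+\lambda_n$ satisfies $\mu_1\ge\mu_2\ge\dots\ge\mu_n\ge0$, while the $i$-th coordinate of $f(x)=\sum_k\lambda_kc_k$ is $s_i\mu_i$. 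Hence
\[
|x|^2-|f(x)|^2\ =\ \sum_{i=1}^n(t_i^2-s_i^2)\,\mu_i^2,
\]
and the inequality $|f(x)|\le|x|$ is reduced to showing that the right-hand side is nonnegative for every nonincreasing nonnegative sequence $(\mu_i^2)_{i=1}^n$.

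That final inequality is summation by parts: with $D_k:=\sum_{i=1}^k(t_i^2-s_i^2)\ge0$ (so $D_0=0$) and $\nu_i:=\mu_i^2$, one has
\[
\sum_{i=1}^n(t_i^2-s_i^2)\,\mu_i^2\ =\ \sum_{i=1}^n(D_i-D_{i-1})\,\nu_i\ =\ \sum_{i=1}^{n-1}D_i\,(\nu_i-\nu_{i+1})+D_n\,\nu_n\ \ge\ 0,
\]
since $D_i\ge0$, $\nu_i-\nu_{i+1}\ge0$, and $\nu_n\ge0$. For the volume assertion: being linear, $f$ scales every volume by the constant $|\det f|$; and $f(B_0(r)\cap B)\subseteq B_0(r)\cap C$ because $f(B)=C$ and $|f(x)|\le|x|\le r$ on $B_0(r)\cap B$. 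Therefore $\vol(B_0(r)\cap C)\ge|\det f|\,\vol(B_0(r)\cap B)$ while $\vol C=|\det f|\,\vol B$, and dividing yields the claimed inequality, the denominators being positive since $0$ is a common vertex of the (non-degenerate) simplices $B$ and $C$. The only step I expect to require real care is the coordinate normalization in the first paragraph — verifying that the orthoscheme condition genuinely forces the vertices into the stated form and gives $|b_k|^2=\sum_{i\le k}t_i^2$; once that is in place, the content of the lemma is exactly the elementary Abel-summation inequality above, and nothing deeper is needed.
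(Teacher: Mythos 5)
Your proposal is correct and follows essentially the same route as the paper: normalize both orthoschemes to the form $b_k=(\beta_1,\dots,\beta_k,0,\dots,0)$, observe that $f$ is diagonal in these frames, and reduce $|f(x)|\le|x|$ to the partial-sum hypotheses $\sum_{i\le k}\beta_i^2\ge\sum_{i\le k}\gamma_i^2$ over the order polytope of coordinates. The only cosmetic difference is that you finish by Abel summation where the paper checks the linear inequality at the vertices $(1,\dots,1,0,\dots,0)$ of that polytope — the same computation in different clothing.
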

\begin{proof}
From the orthoscheme property of $B$ one may assume that 
\[
b_0 = 0,\quad b_k = (\beta_1,\ldots,\beta_k, 0, \ldots, 0)
\]
in an appropriately chosen orthonormal coordinate system. For $C$ one may also assume that
\[
c_0 = 0,\quad c_k=(\gamma_1,\ldots,\gamma_k, 0, \ldots, 0)
\]
in some other orthonormal coordinate system.

Set $\lambda_k = \gamma_k/\beta_k > 0$. The map $f : B\to C$ in these coordinates is the linear transformation by a diagonal matrix with $\lambda_1,\ldots, \lambda_n > 0$ on the diagonal. Let us show that under this diagonal transformation $|x|$ cannot increase for any $x\in B$. A point 
\[
x = (\beta_1 x_1, \ldots, \beta_m x_n)
\]
is in $B$ if and only if
\[
x_1\ge x_2\ge \dots \ge x_n\ge 0.
\]
Its image is
\[
f(x) = (\gamma_1 x_1, \ldots, \gamma_m x_n)
\]
and the inequality $|f(x)|^2\le |x|^2$ in coordinates reads
\[
\beta_1^2 x_1^2 + \dots + \beta_n^2 x_n^2 \le \gamma_1^2 x_1^2 + \dots + \gamma_n^2 x_n^2.
\]
This is a linear inequality on the vector $(x_1^2,x_2^2, \ldots, x_n^2)$ that satisfies the system of linear inequalities following from those defining $B$:
\[
1\ge x_1^2\ge x_2^2\ge \dots \ge x_n^2\ge 0.
\]
Hence it only remains the check the inequality for the vertices of the domain polyhedron:
\[
(x_1^2,\ldots, x_n^2) = (\underbrace{1, \ldots, 1}_k,\underbrace{0,\ldots,0}_{n-k}),
\]
that is, to check the inequality $|c_k|^2 \le |b_k|^2$. The latter holds true by the assumption of the lemma.
\end{proof}

\begin{remark}
The version of Theorem~\ref{theorem:rogers} in~\cite{rogers1958} used a different assumption on distances: For any face $F\subset P$ of codimension $k$ the distance from $0$ to the affine span of $F$ is at least $\sqrt \frac{2k}{k+1}$. From Lemma~\ref{lemma:nplus1obtuse} (see below) one infers that this assumption is satisfied for $P$ a Voronoi cell in a packing of balls in $\mathbb R^n$ of unit radius (a \emph{Voronoi cell} of a ball is the set of points in $\mathbb R^n$ non-strictly closer to the center of this ball than to the centers of any other ball in the packing). After that the argument is basically the same as in the proof of Theorem~\ref{theorem:rogers}, but the resulting simplex $C$ is this time congruent to any simplex of the barycentric subdivision of a regular simplex $T\subset\mathbb R^n$ of edge length $2$. This results in what is called above \emph{the  simplex upper bound} for the packing density.
\end{remark}

\begin{lemma}[Given only to complete the above remark]
\label{lemma:nplus1obtuse}
For a system of $k+1$ unit vectors $u_0,\ldots u_k\in\mathbb R^n$ for some $i\neq j$ the inequality $u_i\cdot u_j\ge -1/k$ holds, that is, the angle between $u_i$ and $u_j$ is at most $\arccos\frac{-1}{k}$.
\end{lemma}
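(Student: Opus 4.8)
The plan is to prove the bound $u_i \cdot u_j \ge -1/k$ by a straightforward averaging argument applied to the sum of the $k+1$ unit vectors. First I would set $v = u_0 + u_1 + \dots + u_k$ and compute
\[
0 \le |v|^2 = \sum_{i=0}^k |u_i|^2 + \sum_{i\neq j} u_i\cdot u_j = (k+1) + \sum_{i\neq j} u_i\cdot u_j,
\]
so that the sum of the $k(k+1)$ ordered inner products $u_i\cdot u_j$ over distinct indices is at least $-(k+1)$. Hence the average of the $u_i\cdot u_j$ over distinct ordered pairs is at least $-(k+1)/(k(k+1)) = -1/k$, and in particular at least one such inner product is $\ge -1/k$, which is exactly the claimed inequality (the statement about the angle follows since $\arccos$ is decreasing on $[-1,1]$).

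There is essentially no obstacle here; the only point requiring a word of care is the quantifier: the lemma only asserts the existence of \emph{some} pair $i\neq j$ with $u_i\cdot u_j\ge -1/k$, and the averaging argument delivers exactly that — it would be false to claim the bound for all pairs (e.g. the vertices of a regular simplex realize $u_i\cdot u_j = -1/k$ for every pair, but perturbations can push some pairs below while others must compensate above). I would also remark that equality throughout forces $v = 0$ together with all inner products equal, i.e. $\{u_i\}$ being the vertex set of a regular simplex centered at the origin, though this is not needed for the statement.

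Finally, to make the connection to the preceding remark explicit (since the lemma is stated only for that purpose), I would note that applying the inequality to the $k+1$ outer unit normals of the facets meeting at a codimension-$k$ face of a Voronoi cell in a unit-radius ball packing, one gets two facets whose defining halfspaces have normals $u_i, u_j$ with $u_i\cdot u_j \ge -1/k$; an elementary computation with the two bounding hyperplanes (each at distance $1$ from the origin, the ball center) then shows the intersection of all $k$ hyperplanes lies at distance at least $\sqrt{2k/(k+1)}$ from the origin, by induction on $k$. But since the remark is not part of the formal development, the proof of the lemma itself is complete once the averaging computation above is carried out.
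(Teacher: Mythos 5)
Your averaging argument is correct and is essentially the paper's proof in contrapositive form: the paper assumes all inner products are below $-1/k$ and derives $|u_0+\dots+u_k|^2<0$, while you expand $|v|^2\ge 0$ directly and average — the same computation either way. The extra remarks on the equality case and the Voronoi application are fine but not needed for the lemma.
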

\begin{proof}
Assume the contrary, $u_i\cdot u_j < -1/k$ for any $i\neq j$ and write
\[
(u_0+\dots + u_k)\cdot (u_0+\dots + u_k) = \sum_i u_i^2 + \sum_{i \neq j} u_i\cdot u_j < k+1 - k(k+1)\frac{1}{k} = 0,
\]
which is a contradiction.
\end{proof}
\section{Proof of the surface area estimate}

\begin{proof}[Proof of Theorem~\ref{theorem:rogers-surface}]
We follow the proof of Theorem~\ref{theorem:rogers} above with appropriate modifications and pass to lower bounds for a single simplex. The simplex $A$ was affinely transformed to $C$ in two stages $A\to B\to C$, and it was established that
\[
\frac{\vol A}{\vol B_0(1)\cap A} \ge \frac{\vol B}{\vol B_0(1)\cap B} \ge \frac{\vol C}{\vol B_0(1)\cap C}.
\]
If one wants to bound the surface area of $P$ (the $(n-1)$-dimensional Riemannian volume of $\partial P$) from below then one may observe that $A$ and $B$ have the same distance from the origin to the affine spans of opposite to the origin facets, hence
\[
\frac{\vol_{n-1} \conv \{a_1,\ldots, a_n\} }{\vol A} = \frac{\vol_{n-1} \conv \{b_1,\ldots, b_n\}}{\vol B}
\]
and therefore
\[
\frac{\vol_{n-1} \conv \{a_1,\ldots, a_n\} }{\vol B_0(1)\cap A} \ge \frac{\vol_{n-1} \conv \{b_1,\ldots, b_n\}}{\vol B_0(1)\cap B}.
\]
In order to prove that the minimal $\vol_{n-1}\partial P$ is attained for the $n$-dimensional cube with edge length $2$ composed of copies of the simplex $C$, it remains to show that 
\[
\frac{\vol_{n-1} \conv \{b_1,\ldots, b_n\} }{\vol B_0(1)\cap B} \ge \frac{\vol_{n-1} \conv \{c_1,\ldots, c_n\}}{\vol B_0(1)\cap C}.
\]
It is sufficient to prove the inequality 
\begin{equation}
\label{equation:circle-move}
\frac{\vol_{n-1} \conv \{b_1,\ldots, b_n\} }{\vol B_0(1)\cap B} \ge \frac{\vol_{n-1} \conv \{b'_1,\ldots, b'_n\}}{\vol B_0(1)\cap B'}
\end{equation}
for the transformation $B\to B'$ such that $b'_i=b_i$ for all $i\neq 1$ and $b'_1$ is such that $|b'_1|=1$ and $b_0b'_1$ is orthogonal to $\conv\{b'_1,b_2,\ldots, b_n\}$. Under these assumptions $b'_1$ is in the affine plane spanned by $b_0,b_1,b_2$ and lies on the same circle with diameter $b_0b_2$ in this plane. Parameterize $b_1(t)$ by the angle $t = \angle b_2b_0b_1(t)$, set $B(t)=\conv \{b_0, b_1(t), b_2, \ldots b_n\}$ and notice that the numerator of
\[
\frac{\vol_{n-1} \conv \{b_1(t),\ldots, b_n\} }{\vol B_0(1)\cap B(t)}
\]
is proportional to $|b_1(t)b_2| = |b_0b_2| \sin t$ from the orthogonality of $b_1(t)b_2$ to $\conv\{b_2,\ldots, b_n\}$, while the denominator is the solid angle at the vertex $b_0$ of the simplex $B'(t)$. For dimension $n=2$ \eqref{equation:circle-move} follows from decreasing of $\frac{\sin t}{t}$ in $t\in (0,\pi)$. For dimension $n=3$ \eqref{equation:circle-move} follows from Lemma~\ref{lemma:spherical-triangle} on solid angles in dimension $3$ (spherical triangles) below.

The remaining transformation $B'\to C$ will then keep the unit distance from the origin to the opposite facet of $B'$ and $C$ and will therefore satisfy 
\[
\frac{\vol_{n-1} \conv \{b'_1,\ldots, b'_n\} }{\vol B_0(1)\cap B'} \ge \frac{\vol_{n-1} \conv \{c_1,\ldots, c_n\}}{\vol B_0(1)\cap C}
\]
by the same argument as for the transformation $A\to B$ above and Lemma~\ref{lemma:orthoscheme-volume}. Eventually, we obtain a lower bound that must be attained for $P=[-1,1]^n$ barycentrically subdivided into simplices congruent to $C$ and observe that $\vol_{n-1}\partial [-1,1]^n = n2^n$.
\end{proof}

\begin{lemma}
\label{lemma:spherical-triangle}
Let $T(t)=s_1(t)s_2s_3$ be a spherical triangle with $|s_1s_2|=t < \pi/2$, $|s_2s_3|=\const <\pi/2$, and $\angle s_1s_2s_3 = \pi/2$. Then 
\[
\frac{\area T(t)}{\sin t}
\]
is increasing in $t$.
\end{lemma}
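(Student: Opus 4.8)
The plan is to compute both the spherical area and $\sin t$ as explicit functions of $t$, using the right angle at $s_2$ and the fixed side $|s_2s_3|=c$, and then to check monotonicity of the ratio by differentiation. Place $s_2$ at the pole of a spherical coordinate system so that $s_1(t)$ and $s_3$ lie on two great-circle arcs through $s_2$ meeting at angle $\pi/2$; then the triangle $T(t)$ is a right spherical triangle with legs $t$ and $c$. By the spherical excess formula, $\area T(t) = A(t) + B(t) + C - \pi$, where $C=\pi/2$ is the angle at $s_2$, and $A(t),B(t)$ are the other two angles. Napier's rules for right spherical triangles give these in closed form: $\cot A(t) = \cos c \,\tan\!\big(\tfrac{?}{}\big)$ — more usefully, $\tan B(t) = \tan c / \sin t$ (angle opposite the leg $c$) and $\cos A(t) = \cos t \sin B(t)$, or equivalently $\tan A(t) = \tan t/\sin c$ after relabelling. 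So the hypotenuse and the two non-right angles are all elementary functions of $t$ and the constant $c$, and $\area T(t) = A(t) + B(t) - \pi/2$.

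Next I would form $g(t) = \dfrac{\area T(t)}{\sin t}$ and show $g'(t) > 0$ on $(0,\pi/2)$, i.e. $(\area T(t))' \sin t - \area T(t)\cos t > 0$, equivalently that $h(t) := \dfrac{(\area T(t))'}{\area T(t)} > \cot t = \dfrac{(\sin t)'}{\sin t}$. Here $(\area T(t))' = A'(t) + B'(t)$, which by differentiating the Napier relations is a rational expression in $\sin t, \cos t, \tan c$. A clean way to get $(\area T(t))'$ without juggling all the formulas is to use the derivative of spherical area with respect to a moving vertex: as $s_1$ moves along the great circle through $s_2$ at unit speed, $\frac{d}{dt}\area T(t)$ equals the geodesic curvature contribution, and in fact a classical identity gives $\frac{d}{dt}\area T(t) = 1 - \cos(\text{altitude from } s_3 \text{ to side } s_1s_2)$ — but this may be more trouble than the direct Napier computation; I would fall back on the explicit formulas. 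Either way one arrives at an inequality between two explicit trigonometric functions of $t$ (with parameter $c\in(0,\pi/2)$) on the interval $(0,\pi/2)$.

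The main obstacle is the final trigonometric inequality $h(t) > \cot t$: after clearing denominators it becomes a polynomial-type inequality in $\sin t,\cos t$ and $\tan c$ that is not obviously sign-definite term by term. I would handle it by checking the boundary behaviour as $t\to 0^+$ (where both sides blow up like $1/t$ and one compares the next-order terms: $\area T(t)\sim \tfrac{1}{2}(\tan c)\, t$ to leading order, so $g(t)\to \tfrac12\tan c$, a finite limit, and one needs $g$ to be increasing from there) and as $t\to(\pi/2)^-$, and then reducing the interior inequality to something manageable — ideally showing that the relevant difference is a sum of manifestly positive terms after substituting $u=\cos t$ or $u=\tan(t/2)$, or by showing its derivative has constant sign. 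A useful sanity check along the way: for the planar analogue the statement degenerates to monotonicity of $t/\sin t$, which appears as the $n=2$ case in the proof of Theorem~\ref{theorem:rogers-surface}, so the spherical inequality must reduce to that in the limit of a small triangle, which pins down the leading-order behaviour and guides the choice of substitution. I expect the cleanest route is to write $\area T(t) = A(t)+B(t)-\pi/2$ with $\tan A = \tan t/\sin c$ and $\tan B = \tan c/\sin t$, differentiate, and verify $g'(t)>0$ by showing the numerator, as a function of $v=\sin t\in(0,1)$ with parameter $\tan c$, is positive — a single-variable calculus check.
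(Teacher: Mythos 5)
There is a genuine gap: the lemma's entire content is the final monotonicity inequality, and your proposal never establishes it. You correctly set up $\area T(t)=A(t)+B(t)-\pi/2$ with $\tan A=\tan t/\sin c$ and $\tan B=\tan c/\sin t$ (one displayed relation is left garbled with a ``?''), but the decisive step is deferred with ``I would handle it by \dots'' and a list of hoped-for strategies. Moreover, the reduction you promise does not happen as described: in the numerator $(\area T)'\sin t-(\area T)\cos t$ of $g'$, the derivative $(\area T)'$ is indeed rational in $\sin t,\cos t,\sin c,\cos c$, but the term $(\area T)\cos t$ still contains the arctangents $A(t)+B(t)$, so clearing denominators does \emph{not} yield ``a polynomial-type inequality in $\sin t,\cos t,\tan c$''; it remains transcendental, and the single-variable check you invoke at the end is not set up. A symptom that the computation was not carried through: your small-$t$ asymptotic is wrong. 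From $A\approx t/\sin c$ and $B\approx \pi/2-t\cos c/\sin c$ one gets $\area T(t)\approx t\,(1-\cos c)/\sin c$, so $g(t)\to\tan(c/2)$, not $\tfrac12\tan c$ (these agree only to leading order in small $c$).

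For contrast, the paper avoids the excess formula entirely: it projects $T(t)$ gnomonically to the tangent plane at $s_2$, writes $\area T(t)=\int_{T(t)}(1+x^2+y^2)^{-3/2}\,dx\,dy$ over the planar right triangle with legs $\tan t$ and $q=\tan c$, performs the inner integration in $x$ explicitly, and then observes that after dividing by $\sin t$ the resulting one-dimensional integrand is \emph{pointwise} increasing in $t$ (via the elementary fact that $(1+u)/(a+bu)$ increases in $u=\tan^2 t$ for $a>b>0$). The monotonicity is thus proved under the integral sign, with no global trigonometric inequality to verify. If you want to salvage your route, one workable closing move is to note that the numerator $N(t)=(\area T)'\sin t-(\area T)\cos t$ satisfies $N(0^+)=0$ and $N'(t)=\bigl((\area T)''+\area T\bigr)\sin t$, which reduces the problem to the sign of $(\area T)''+\area T$; but that still requires a genuine computation that your proposal does not supply.
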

\begin{proof}
Let us make a central projection from the sphere to the plane tangent to the sphere at $s_2$. The triangle is then
\[
T(t) = \{(x,y)\in \mathbb R^2\ |\ x,y\ge 0, x/\tan t + y/q \le 1\}.
\]
Then
\begin{multline*}
\area T(t) = \int_{T(t)} \frac{dxdy}{(1+x^2+y^2)^{3/2}} 
= \int_0^q \int_0^{(1 - y/q)\tan t} \frac{dx}{(1+x^2+y^2)^{3/2}}\; dy = \\
= \int_0^q \frac{(1 - y/q)\tan t}{\sqrt{1+y^2+((1 - y/q)\tan t)^2}(1+y^2)}\; dy.
\end{multline*}
Then
\[
\frac{\area T(t)}{\sin t} = \frac{\area T(t) \sqrt{1+ \tan^2 t}}{\tan t} 
= \int_0^q \sqrt{\frac{1+ \tan^2 t}{1+y^2+((1 - y/q)\tan t)^2}} \frac{1 - y/q}{(1+y^2)}\; dy.
\]
The expression under the integral is increasing in $t$ since every 
\[
\frac{1+u}{a+bu}
\]
is increasing in $u=\tan^2 t>0$ when $a>b>0$, as is easily seen after taking derivative.
\end{proof}

\bibliography{../Bib/karasev}
\bibliographystyle{abbrv}
\end{document}